\title{Ribbon concordance and fibered predecessors}
\author[John A. Baldwin]{John A. Baldwin}
\address{Department of Mathematics \\ Boston College}
\email{john.baldwin@bc.edu}
\author[Steven Sivek]{Steven Sivek}
\address{Department of Mathematics\\Imperial College London}
\email{s.sivek@imperial.ac.uk}
\newtheorem*{rep@theorem}{\rep@title}
\newcommand{\newreptheorem}[2]{%
\newenvironment{rep#1}[1]{%
 \def\rep@title{#2 \ref{##1}}%
 \begin{rep@theorem}}%
 {\end{rep@theorem}}}
\newtheorem {theorem}{Theorem}
\newtheorem {lemma}[theorem]{Lemma}
\newtheorem {proposition}[theorem]{Proposition}
\newtheorem {corollary}[theorem]{Corollary}
\newtheorem {question}[theorem]{Question}
\newtheorem {problem}[theorem]{Problem}
\numberwithin{equation}{section}
\numberwithin{theorem}{section}
\theoremstyle{definition}
\newtheorem{remark}[theorem]{Remark}
\newtheorem*{remark*}{Remark}
\newlist{pcases}{enumerate}{1}
\setlist[pcases]{
  label=\bf{Case~\arabic*:}\protect\thiscase.~,
  ref=\arabic*,
  align=left,
  labelsep=0pt,
  leftmargin=0pt,
  labelwidth=0pt,
  parsep=0pt
}
\newcommand{\case}[1][]{%
  \if\relax\detokenize{#1}\relax
    \def\thiscase{}%
  \else
    \def\thiscase{~#1}%
  \fi
  \item
}
\newcommand{\Z}{\mathbb{Z}}
\newcommand{\Q}{\mathbb{Q}}
\renewcommand{\phi}{\varphi}
\newcommand\HFKt{\widetilde{\mathit{HFK}}}
\newcommand\HFK{\widehat{\mathit{HFK}}}
\DeclareFontFamily{U}{mathx}{\hyphenchar\font45}
\DeclareFontShape{U}{mathx}{m}{n}{
      <5> <6> <7> <8> <9> <10>
      <10.95> <12> <14.4> <17.28> <20.74> <24.88>
      mathx10
      }{}
\DeclareSymbolFont{mathx}{U}{mathx}{m}{n}
\DeclareMathAccent{\widecheck}{0}{mathx}{"71}
\newcommand{\tr}{\operatorname{tr}}
\DeclareFontFamily{OMX}{MnSymbolE}{}
\DeclareSymbolFont{MnLargeSymbols}{OMX}{MnSymbolE}{m}{n}
\DeclareFontShape{OMX}{MnSymbolE}{m}{n}{
    <-6>  MnSymbolE5
   <6-7>  MnSymbolE6
   <7-8>  MnSymbolE7
   <8-9>  MnSymbolE8
   <9-10> MnSymbolE9
  <10-12> MnSymbolE10
  <12->   MnSymbolE12
}{}
\DeclareFontShape{OMX}{MnSymbolE}{b}{n}{
    <-6>  MnSymbolE-Bold5
   <6-7>  MnSymbolE-Bold6
   <7-8>  MnSymbolE-Bold7
   <8-9>  MnSymbolE-Bold8
   <9-10> MnSymbolE-Bold9
  <10-12> MnSymbolE-Bold10
  <12->   MnSymbolE-Bold12
}{}
\let\llangle\@undefined
\let\rrangle\@undefined
\DeclareMathDelimiter{\llangle}{\mathopen}%
                     {MnLargeSymbols}{'164}{MnLargeSymbols}{'164}
\DeclareMathDelimiter{\rrangle}{\mathclose}%
                     {MnLargeSymbols}{'171}{MnLargeSymbols}{'171}
\newcounter{desccount}
\newcommand{\descref}[1]{\hyperref[#1]{#1}}
\tikzset{every picture/.style=thick}
\tikzset{link/.style = { white, double = black, line width = 1.75pt, double distance = 1.25pt, looseness=1.75 }}
\tikzset{crossing/.style = {draw, circle, dotted, minimum size=0.5cm, inner sep=0, outer sep=0}}
\pgfplotsset{compat=1.12}
\begin{document}

\begin{abstract}
Given any knot $K\subset S^3$ we prove that there are only finitely many hyperbolic fibered knots which are ribbon concordant to $K$. It follows  that every fibered knot in $S^3$ has only finitely many hyperbolic predecessors under ribbon concordance. Our proof combines results about  maps on Floer homology induced by ribbon  cobordisms with a relationship between the knot Floer homology of a fibered knot and fixed points of its monodromy. We then use the same  techniques in combination with results of Cornish and Kojima--McShane to prove an inequality relating the  volumes of ribbon concordant hyperbolic fibered knots.

\end{abstract}

\maketitle

\section{Introduction}
\label{sec:intro}

Given knots $J$ and $K$ in $S^3$, a \emph{ribbon concordance from $J$ to $K$} is a smooth concordance $C\subset S^3\times I$ from $J$ to $K$  such that the projection \[S^3\times I \to I\] restricts to a Morse function on $C$ with no index 2 critical points. If such a concordance exists, we write $J\leq K$ and say that \emph{$J$ is ribbon concordant to $K$.} This relation was introduced by Gordon in \cite{gordon-ribbon}, where he conjectured that it defines a partial ordering on knots in $S^3$. Gordon's conjecture was only recently proven by Agol in \cite{agol-ribbon}. 

In the same paper, Gordon   asked \cite[Question~6.2]{gordon-ribbon} whether for every sequence of knots \[\dots \leq K_3\leq K_2 \leq K_1\]  there is some $m$ such that $K_n = K_m$ for all  $n\geq m$. This question has an affirmative answer when $K_1$ is fibered, or nearly fibered of genus one (Lemma \ref{lem:explain}). Moreover, Zemke's work \cite{zemke-ribbon} implies that the knot Floer homologies of the  knots in any such sequence must eventually stabilize (their Khovanov homologies too, by \cite{levine-zemke-ribbon}), but Gordon's question is wide open in general.

In this paper, we pose and study the following even more basic question,  an affirmative answer to which would imply an affirmative answer to Gordon's question above:

\begin{question}
\label{ques:finite} For each knot $K\subset S^3$, are there only finitely many $J\leq K$?
\end{question}

Our main theorem makes progress on this question, stating that each knot in $S^3$ has only finitely many \emph{hyperbolic fibered} predecessors under ribbon concordance:

\begin{theorem}
\label{thm:main1}
For each knot $K\subset S^3$, there are only finitely many hyperbolic fibered $J\leq K$.
\end{theorem}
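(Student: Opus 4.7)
My plan combines the two ingredients flagged in the abstract: Zemke's theorem that a ribbon concordance $J\leq K$ induces a grading-preserving injection $\hfkhat(J)\hookrightarrow \hfkhat(K)$ whose image is a direct summand, and a relationship, established earlier in the paper, between the knot Floer homology of a fibered knot and the fixed points of its monodromy. Fix $K$, set $D=\dim\hfkhat(K)$, and consider an arbitrary hyperbolic fibered $J\leq K$ with monodromy $\phi_J$, which is pseudo-Anosov by Thurston's classification.

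First I would apply Zemke to get $\dim\hfkhat(J,s)\leq\dim\hfkhat(K,s)$ for every Alexander grading $s$. Ni's fiberedness detection identifies the top nontrivial grading of $\hfkhat(J)$ with the Seifert genus $g_J$, so $g_J$ is bounded by the top grading supporting $\hfkhat(K)$. Hence the fibers of all candidates $J$ lie on finitely many topological surfaces $\Sigma_{g,1}$, and $\dim\hfkhat(J)\leq D$ uniformly.

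Next I would apply the fibered/fixed-point formula in the next-to-top Alexander grading to obtain a uniform bound of the form $|\mathrm{Fix}(\phi_J)|\leq\dim\hfkhat(J,g_J-1)\leq D$. The main step is then to upgrade this fixed-point bound into a bound on the dilatation $\lambda(\phi_J)$. A natural route is to run the Zemke-plus-fixed-point argument on a suitable family of fibered satellites of $J$ whose monodromies encode iterates $\phi_J^n$, using that such constructions respect ribbon concordance; since $|\mathrm{Fix}(\phi_J^n)|^{1/n}\to\lambda(\phi_J)$ for pseudo-Anosov maps, uniform control on fixed points of iterates would give a uniform bound on the dilatation.

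From a uniform dilatation bound on finitely many surfaces, one obtains a uniform upper bound on $\mathrm{vol}(S^3\setminus J)$ via Brock's relation between Weil--Petersson length and hyperbolic volume (or via Kojima--McShane as used later for the volume inequality). Then the Jørgensen--Thurston finiteness theorem gives only finitely many hyperbolic 3-manifolds of volume at most that bound, and Gordon--Luecke passes from complements to knots, yielding finitely many hyperbolic fibered $J\leq K$. The step I expect to be the main obstacle is promoting a fixed-point bound at level one to a dilatation bound: arranging a satellite construction that is simultaneously compatible with ribbon concordance, visibly fibered, and transparent enough for the fixed-point formula to detect $\phi_J^n$ is delicate. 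If that route fails, one would instead hope that the full bigraded (or $\hfk^-$ module-theoretic) data already controls the dilatation directly from $J$ itself.
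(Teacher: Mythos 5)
Your proposal correctly identifies several of the key ingredients -- Zemke's summand inclusion, the fixed-point formula of Ni/Ghiggini--Spano for the next-to-top Alexander grading of a fibered knot, the genus bound $g(J)\leq g(K)$, the need to access iterates $\phi_J^n$ to control dilatation via $\lim(\#\mathrm{Fix}(\phi_J^n))^{1/n}$, and Gordon--Luecke at the end. However, there are two genuine gaps.

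The main one you flag yourself: how to reach $\phi_J^n$ from Floer homology of knots in a way compatible with ribbon concordance. The ``fibered satellites'' idea does not work -- satellites of $J$ have monodromies assembled from $\phi_J$ and the pattern's monodromy, not $\phi_J^n$, and there is no clean way to make such a construction respect ribbon concordance while controlling $\HFK$. The correct construction is the $n$-fold \emph{cyclic branched cover}: the lift $J_n\subset\Sigma_n(J)$ is fibered with monodromy $\phi_J^n$, a ribbon concordance lifts to a ribbon cobordism of branched covers, and (this is the subtle part) when $n$ is a power of $2$ the lifted cobordism complement is a ribbon $\Z/2$-\emph{homology} cobordism (Lemma~\ref{lem:branched-cover}). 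One then needs not Zemke's original theorem but the extension of Daemi--Lidman--Vela-Vick--Wong \cite{dlvvw} to ribbon $\Z/2$-homology cobordisms, together with an $n$-uniform upper bound on $\dim\HFK(\Sigma_n(K),K_n)$, which comes from Levine's grid-diagram description of the branched double covers (Lemma~\ref{lem:inequality}). Without the branched-cover lift, a fixed-point bound at level one does not control dilatation: there are pseudo-Anosov maps with one fixed point and arbitrarily large stretch factor.

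The second gap is in the finiteness at the end. Once you have a dilatation bound and a genus bound, the right tool is Ivanov's theorem that on a fixed surface there are only finitely many conjugacy classes of pseudo-Anosov maps with dilatation below a given constant \cite{ivanov-bounded-dilatation}; these determine finitely many mapping tori, hence finitely many knots by Gordon--Luecke. Your proposed route -- convert the dilatation bound into a volume bound and invoke J{\o}rgensen--Thurston -- fails, because there are infinitely many one-cusped hyperbolic $3$-manifolds of volume below any given $V>1$ (e.g.\ the twist-knot complements accumulating at the Whitehead link volume). Bounded volume alone does not give finiteness of knot complements; you need the dynamical finiteness directly. (The Kojima--McShane inequality is used in the paper only for the volume comparison theorems, not for Theorem~\ref{thm:main1}.)
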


Theorem \ref{thm:main1} implies, via work of Silver \cite{silver} and Kochloukova \cite{kochloukova}, that each fibered knot has only finitely many \emph{hyperbolic} predecessors,   nearly answering Question \ref{ques:finite} when $K$ is fibered:

\begin{corollary}
\label{cor:main1}
For each fibered knot $K\subset S^3$, there are only finitely many hyperbolic  $J\leq K$. 
\end{corollary}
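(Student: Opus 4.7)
The plan is to reduce Corollary~\ref{cor:main1} to Theorem~\ref{thm:main1} by showing that every ribbon-concordance predecessor of a fibered knot is itself fibered. Once this reduction is established, the corollary follows immediately: any hyperbolic $J\leq K$ is then hyperbolic and fibered, and Theorem~\ref{thm:main1} provides only finitely many such.

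For the reduction, I would first invoke the group-theoretic consequences of ribbon concordance going back to Gordon. If $J\leq K$ via a ribbon concordance $C$, then the knot group $G_J=\pi_1(S^3\setminus J)$ embeds as a retract of $G_K=\pi_1(S^3\setminus K)$; that is, there is a surjection $G_K\twoheadrightarrow G_J$ admitting a section, coming from retracting the concordance exterior onto the $J$-end. Because this surjection sends commutators to commutators, it restricts to a surjection $[G_K,G_K]\twoheadrightarrow[G_J,G_J]$ of commutator subgroups. The role of Silver's work \cite{silver} is to supply this commutator-level retraction in exactly the form needed, together with the relevant finiteness bookkeeping on Alexander-type invariants.

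Since $K$ is fibered, Stallings' fibration theorem identifies $[G_K,G_K]$ as a finitely generated free group of rank $2g(K)$. Its image $[G_J,G_J]$ under the retraction is therefore also finitely generated. I would then appeal to Kochloukova's theorem \cite{kochloukova}, which strengthens the Stallings-type fibering criterion in precisely the setting of knot-like groups and produces, from the finite generation of $[G_J,G_J]$ together with the knot-group structure on $G_J$, the conclusion that $J$ is fibered. Combining this with Theorem~\ref{thm:main1} completes the proof.

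The main subtlety to confirm is that the retraction produced by the ribbon concordance genuinely descends to a retraction on commutator subgroups compatibly with the hypotheses of Stallings' and Kochloukova's criteria for the knot group $G_J$. This is precisely the bookkeeping carried out by the Silver and Kochloukova references; no further topological or Floer-theoretic input beyond Theorem~\ref{thm:main1} is needed.
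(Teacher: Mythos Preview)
Your overall strategy matches the paper's exactly: reduce to Theorem~\ref{thm:main1} by invoking Silver \cite{silver} and Kochloukova \cite{kochloukova} to show that any $J\leq K$ with $K$ fibered must itself be fibered (the paper cites \cite{miyazaki} for the details of this implication).

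One point in your sketch is not quite right, however. It is not the case that $G_J$ is a retract of $G_K$, nor that there is a surjection $G_K\twoheadrightarrow G_J$. What Gordon actually shows is that the inclusion of the $J$-end into the concordance exterior $X=(S^3\times I)\setminus\nu(C)$ induces an \emph{injection} $G_J\hookrightarrow\pi_1(X)$, while the inclusion of the $K$-end induces a \emph{surjection} $G_K\twoheadrightarrow\pi_1(X)$; so $G_J$ sits inside a quotient of $G_K$, not as a retract of $G_K$ itself. The Silver--Kochloukova argument runs through this intermediate group $\pi_1(X)$ and the associated infinite cyclic covers rather than through a direct retraction. This does not affect the validity of your reduction---the conclusion that $J$ is fibered still follows---but the mechanism you describe should be corrected.
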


Our proof of Theorem \ref{thm:main1} in the next section uses results about the injectivity of  Heegaard Floer invariants under ribbon concordance and  ribbon $\Z/2$-homology cobordism, together with the fact that the knot Floer homology of a fibered knot bounds the number of fixed points of its monodromy, to prove that the dilatation of a pseudo-Anosov representative of the monodromy of $J$ is bounded above by the factorial of the arc index of $K$ (Proposition \ref{prop:dilatation}). We then use the fact that there are only finitely many conjugacy classes of pseudo-Anosov homeomorphisms of a fixed surface with dilatation less than a given constant to show that there are only finitely many possible $J \leq K$.

The bound in Proposition \ref{prop:dilatation}, combined with work of Kojima--McShane \cite[Theorem 1]{kojima-mcshane} on  the relationship between the entropy of a pseudo-Anosov homeomorphism (the log of its dilatation) and the volume of its mapping torus, also shows   that the genus and arc index of a knot give rise to a bound on the volume of any  hyperbolic fibered predecessor:

\begin{theorem}
\label{thm:vol-arc} Suppose that $K\subset S^3$ is a knot of genus $g$ and arc index $\delta$. Then \[\mathrm{vol}(S^3\setminus J) \leq 3\pi(2g-1)\log(\delta!)\] for every hyperbolic fibered  $J\leq K$.
\end{theorem}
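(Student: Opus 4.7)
The plan is to bound $\mathrm{vol}(S^3\setminus J)$ by chaining together three ingredients: Proposition \ref{prop:dilatation}, the volume--entropy inequality of Kojima--McShane, and a genus bound stemming from the ribbon concordance. Since $J$ is hyperbolic and fibered, its exterior is diffeomorphic to the mapping torus $M_\phi$ of the pseudo-Anosov monodromy $\phi\colon \Sigma_J\to\Sigma_J$, where the fiber $\Sigma_J$ is a once-punctured surface of genus $g(J)$, so $|\chi(\Sigma_J)|=2g(J)-1$. Applying \cite[Theorem 1]{kojima-mcshane} directly yields
\[
\mathrm{vol}(S^3\setminus J)\;\leq\;3\pi(2g(J)-1)\log\lambda(\phi),
\]
where $\lambda(\phi)$ denotes the dilatation of $\phi$.

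The next step is to feed in the bound $\lambda(\phi)\leq\delta!$ from Proposition \ref{prop:dilatation}, so that $\log\lambda(\phi)\leq\log(\delta!)$. What remains is to replace $g(J)$ with $g=g(K)$, i.e., to verify $g(J)\leq g(K)$. Here I would invoke the Zemke injection $\HFK(J)\hookrightarrow \HFK(K)$ for ribbon concordances that is already referenced in the introduction: because this map preserves the Alexander grading and $\HFK$ detects the Seifert genus, its injectivity forces $g(J)\leq g(K)=g$, and hence $2g(J)-1\leq 2g-1$.

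Substituting these two reductions into the Kojima--McShane inequality gives the desired $\mathrm{vol}(S^3\setminus J)\leq 3\pi(2g-1)\log(\delta!)$. I do not foresee a real obstacle once the pieces are in place: the dilatation bound is the main work of Proposition \ref{prop:dilatation}, Zemke's injectivity is used throughout the paper, and the volume estimate is imported wholesale from \cite{kojima-mcshane}. The one point to treat with care is the genus comparison, since a ribbon concordance only directly controls the slice genus of $J$ by $g(K)$; it is the Floer-theoretic argument, rather than a naive capping-off, that upgrades this to a Seifert-genus bound for a general predecessor $J$.
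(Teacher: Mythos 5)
Your proposal is correct and matches the paper's proof essentially step for step: apply the Kojima--McShane volume--entropy inequality, bound the dilatation by $\delta!$ via Proposition \ref{prop:dilatation}, and bound $g(J)\leq g(K)$ by Zemke's ribbon concordance result. The only cosmetic difference is that you spell out the mechanism behind Zemke's genus inequality (injectivity of the $\HFK$ map plus genus detection), whereas the paper simply cites it directly.
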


When $K$ is also hyperbolic and fibered, we use the same approach, together with work of Cornish \cite{cornish}, to bound the dilatation of the monodromy of $J$ in terms of that of  $K$ (Lemma \ref{lem:entropy}). We then apply the results of  Kojima \cite{kojima} and Kojima--McShane \cite{kojima-mcshane}   to relate the volumes of two ribbon concordant hyperbolic fibered knots:

\begin{theorem}\label{thm:volume} For each natural number $g$ and real number $\epsilon>0$, there exists a constant $c_{g,\epsilon}$ such that if $J$ and $K$ are hyperbolic fibered knots in $S^3$ with $J\leq K$, then \[\mathrm{vol}(S^3\setminus J) \leq c_{g,\epsilon}\cdot \mathrm{vol}(S^3\setminus K), \] whenever $K$ has genus $g$ and the systole of $S^3\setminus K$ is at least $\epsilon$.
\end{theorem}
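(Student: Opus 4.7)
The strategy is to compare $\mathrm{vol}(S^3\setminus J)$ and $\mathrm{vol}(S^3\setminus K)$ by chaining three inequalities through the dilatations $\lambda(\phi_J)$ and $\lambda(\phi_K)$ of the two pseudo-Anosov monodromies. I use throughout that ribbon concordance cannot decrease Seifert genus (for instance by reading off the top Alexander grading from Zemke's injection of $\HFK(J)$ into $\HFK(K)$), so $g(J)\le g(K)=g$, and the fiber of $J$ has Euler characteristic $|\chi|=2g(J)-1\le 2g-1$.

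The first inequality is Kojima--McShane \cite{kojima-mcshane} applied to the mapping torus $S^3\setminus J$:
\[
\mathrm{vol}(S^3\setminus J) \;\le\; 3\pi\bigl(2g(J)-1\bigr)\log\lambda(\phi_J) \;\le\; 3\pi(2g-1)\log\lambda(\phi_J).
\]
The second is Lemma \ref{lem:entropy}, which provides a bound $\log\lambda(\phi_J)\le F_g\bigl(\log\lambda(\phi_K)\bigr)$ with $F_g$ depending only on $g$ (and, for the purposes of the theorem, effectively affine in $\log\lambda(\phi_K)$). The third is a theorem of Kojima \cite{kojima}, which under the hypothesis that the systole of $S^3\setminus K$ is at least $\epsilon$ yields an inequality
\[
\log\lambda(\phi_K) \;\le\; D(g,\epsilon)\cdot\mathrm{vol}(S^3\setminus K).
\]
Concatenating the three inequalities gives $\mathrm{vol}(S^3\setminus J)\le c_{g,\epsilon}\cdot\mathrm{vol}(S^3\setminus K)$ for an explicit $c_{g,\epsilon}$ assembled from $3\pi(2g-1)$, the relevant slope in $F_g$, and $D(g,\epsilon)$.

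The main obstacle is Lemma \ref{lem:entropy} itself. Its proof follows the same outline as Proposition \ref{prop:dilatation}: ribbon concordance injectivity embeds $\HFK(J)$ into $\HFK(K)$, and because $J$ is fibered the top summand of $\HFK(J)$ controls $|\mathrm{Fix}(\phi_J)|$ and hence $\lambda(\phi_J)$. What changes is the right-hand input---in Proposition \ref{prop:dilatation} the rank of $\HFK(K)$ is bounded diagrammatically by the arc index, while here it must be bounded purely in terms of $\lambda(\phi_K)$. Cornish's estimates \cite{cornish} for the periodic-orbit growth of a pseudo-Anosov map supply exactly this replacement. Once Lemma \ref{lem:entropy} is in hand the remainder is a routine concatenation of volume--entropy estimates; the systole hypothesis on $S^3\setminus K$ enters only in the third step, where without some thick-geometry control on $S^3\setminus K$ Kojima's lower bound would fail (the volume could collapse while $\log\lambda(\phi_K)$ stayed bounded away from zero).
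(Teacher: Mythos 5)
Your proposal is correct and matches the paper's proof essentially line for line: you chain Kojima--McShane for $S^3\setminus J$, Lemma~\ref{lem:entropy} (with $\log\lambda(J)\le g(K)\log\lambda(K)$, the affine bound you describe), and Kojima's systole-dependent lower bound for $\log\lambda(K)$ in terms of $\mathrm{vol}(S^3\setminus K)$, together with Zemke's inequality $g(J)\le g(K)$. The only cosmetic imprecision is your phrase ``embeds $\HFK(J)$ into $\HFK(K)$'' when summarizing Lemma~\ref{lem:entropy}: as in Proposition~\ref{prop:dilatation} the injection is actually between the lifts $\HFK(\Sigma_n(J),J_n)$ and $\HFK(\Sigma_n(K),K_n)$ for $n$ a power of $2$, followed by a limiting argument, but you correctly signal this by saying the argument mirrors Proposition~\ref{prop:dilatation}.
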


These results  bear on another question  posed  by Gordon in \cite[Question 6.4]{gordon-ribbon}, which asks whether simplicial volume is monotonic under ribbon concordance.

\begin{remark}
A \emph{handle-ribbon concordance}, also known as a \emph{strongly homotopy-ribbon concordance}, is a smooth concordance in $S^3\times I$ from $J$ to $K$ whose complement can be obtained from $S^3\setminus J$ by attaching $1$-handles and $2$-handles. Ribbon concordances are handle-ribbon but it is open whether the converse holds. The results above   remain true, via the same  proofs, if ``$J\leq K$" is replaced  in their statements by ``$J$ is handle-ribbon concordant to $K$." 
\end{remark}

\begin{remark}
Ian Agol tells  us  that he has since found a different approach to some of these results. In particular, he can prove that simplicial volume is monotonic under ribbon concordances between fibered knots, settling Gordon's question in that case.
\end{remark}

\subsection{Organization} 
We prove these results in \S\ref{sec:proofs}. In \S\ref{sec:comment}, we discuss   questions related to this work.  

\subsection{Acknowledgements} We thank Ian Agol, Jen Hom,  Tye Lidman, Robert Lipshitz, Yi Liu, and Chi Cheuk Tsang for helpful and interesting conversations. JAB was supported by NSF Grant DMS-2506250.  SS was supported by the Engineering and Physical Sciences Research Council [grant number UKRI1016].  No data were created or analyzed in this work.

\section{Proofs}\label{sec:proofs}
Let us set  the following notation and conventions:
\begin{itemize}
\item all Floer homology groups below are with coefficients in $\Z/2$;
\item given a knot $K\subset S^3$, let $K_n$ denote the lift of $K$ to its $n$-fold cyclic branched cover $ \Sigma_n(K)$;
\item given a hyperbolic fibered knot $K\subset S^3$, let $\lambda(K)$ denote the dilatation of a pseudo-Anosov representative of its monodromy.
\end{itemize}
We begin by bounding the knot Floer homology of $K_n$ in terms of the arc index of $K$. 

\begin{lemma}
\label{lem:inequality}
Suppose that $K\subset S^3$ is a knot with arc index $\delta$. Then \[\dim \HFK(\Sigma_n(K),K_n) \leq \frac{(\delta!)^n}{2^{\delta -1}} \] for all natural numbers $n$.
\end{lemma}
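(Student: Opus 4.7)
My approach is to construct a multi-pointed Heegaard diagram for $(\Sigma_n(K), K_n)$ as the $n$-fold branched cover of a grid diagram for $K$ of grid number $\delta$, and to bound $\dim\HFK(\Sigma_n(K),K_n)$ by counting generators of the associated combinatorial chain complex $\CFKt$.

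An arc presentation of $K$ with $\delta$ arcs is equivalent to a toroidal grid diagram $(T^2,\boldsymbol{\alpha},\boldsymbol{\beta},\mathbb{X},\mathbb{O})$ with $\delta$ $\alpha$-circles, $\delta$ $\beta$-circles, and $\delta$ pairs of basepoints on $T^2$, viewed as a multi-pointed Heegaard diagram for $(S^3,K)$. I will pull this diagram back to the $n$-fold cyclic cover $\widetilde{T}$ of $T^2$ branched at the $2\delta$ basepoints (the points where $K$ meets $T^2$). By Riemann--Hurwitz, $\widetilde{T}$ has genus $1+\delta(n-1)$. Each $\alpha$- and $\beta$-circle is disjoint from $K$ and has zero linking number with $K$ in $S^3$ (its canonical bounding disk meets $K$ in one $X$ and one $O$ with opposite signs), so it has trivial monodromy in the branched cover and lifts to $n$ disjoint circles. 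This yields $n\delta$ $\alpha$-circles and $n\delta$ $\beta$-circles on $\widetilde{T}$. Each branch point has a unique preimage, so we obtain a multi-pointed Heegaard diagram for $(\Sigma_n(K), K_n)$ in which the connected knot $K_n$ carries $\delta$ pairs of basepoints.

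Generators of $\CFKt$ for this diagram are perfect matchings in the bipartite graph whose vertices are the lifted $\alpha$- and $\beta$-circles, with edges recording intersection points. Since the deck group acts freely on the preimage of each intersection point $\alpha_i\cap\beta_j$, each lift $\alpha_i^{(k)}$ meets exactly one lift of each $\beta_j$, so this bipartite graph is $\delta$-regular. By the Br\'egman--Minc permanent inequality, the number of perfect matchings is at most $\bigl((\delta!)^{1/\delta}\bigr)^{n\delta}=(\delta!)^n$. Combining with the standard identification $H_*(\CFKt)\cong\HFK(\Sigma_n(K),K_n)\otimes V^{\otimes(\delta-1)}$, where $V\cong(\Z/2)^2$, valid for a multi-pointed Heegaard diagram of a knot with $\delta$ basepoint pairs, yields
\[2^{\delta-1}\cdot\dim\HFK(\Sigma_n(K),K_n)\leq (\delta!)^n,\]
which is exactly the claimed bound.

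The main obstacle is the branched-cover construction: one must verify carefully that pulling back the grid diagram produces a genuine multi-pointed Heegaard diagram for $(\Sigma_n(K),K_n)$ with the circle and basepoint counts described, and that the deck group acts as claimed on the intersection points so that the resulting bipartite graph is regular of degree $\delta$. Once these geometric facts are in hand, the Br\'egman--Minc inequality and the multi-pointed $\HFK$ stabilization formula are standard inputs that complete the argument.
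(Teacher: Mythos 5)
Your proposal follows essentially the same argument as the paper: both pass to the $n$-fold branched cover of a grid diagram, both bound generators via the Br\'egman--Minc permanent inequality for the $\delta$-regular bipartite intersection graph, and both divide out by $2^{\delta-1}$ via the multi-pointed stabilization formula. The only real difference is presentational: the paper simply cites Levine \cite{levine-cyclic} for both the construction of the branched-cover Heegaard diagram and the identity $\dim\HFKt(\mathcal{H})=2^{\delta-1}\dim\HFK(\Sigma_n(K),K_n)$, rather than re-deriving them (and your parenthetical justification that each $\alpha$-circle has linking number zero with $K$ is a bit imprecise as phrased, but the fact itself is correct and is part of Levine's construction).
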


\begin{proof}
Levine shows in \cite{levine-cyclic} that $K_n\subset \Sigma_n(K)$ can be represented by a multipointed Heegaard diagram $\mathcal{H} = (S, \alpha,\beta,\mathbf{z},\mathbf{w})$ with \[|
\alpha| = |\beta| = \delta n \quad \textrm{ and } \quad |\mathbf{z}| = |\mathbf{w}| = \delta,\] obtained as an $n$-fold cyclic branched cover of a grid diagram for $K$. Moreover, each curve in $\alpha$ has $\delta$ total intersections with the curves in $\beta$, and vice versa. It follows that the associated Heegaard Floer complex, and therefore its homology $\HFKt(\mathcal{H}),$ has dimension at most $(\delta!)^n$. This is obvious from a closer inspection of Levine's diagrams, but also follows from a  general fact: generators are perfect matchings of a bipartite graph with $\alpha$ circles on one side and $\beta$ curves on the other, with an edge for each point of $\alpha \cap \beta$; there are $2\delta n$ vertices, each with degree $\delta$, so there are at most $(\delta!)^n$ perfect matchings by  the main result of \cite{alon-friedland}. Finally, \[\dim \HFK(\Sigma_n(K),K_n) = \frac{1}{2^{\delta-1}}\cdot \dim \HFKt(\mathcal{H})\] \cite[Theorem~1.1]{levine-cyclic}, from which the result follows.
\end{proof}

\begin{lemma} \label{lem:branched-cover}
Suppose  $C \subset S^3\times I$ is  a ribbon concordance from $J$ to $K$.  Let
\[ W_n : \Sigma_n(J) \to \Sigma_n(K) \]
be the $n$-fold branched cyclic cover of $S^3\times I$ branched along $C$, and let $C_n$ be the lift of $C$ to $W_n$.  Then $W_n\setminus \nu(C_n)$ is a ribbon $\Z/p$-homology cobordism whenever $n$ is a power of a prime $p$.
\end{lemma}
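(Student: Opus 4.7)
The plan is to verify two properties of $\tilde{X}:=W_n\setminus \nu(C_n)$: that it admits a handle decomposition on $\Sigma_n(J)\setminus \nu(J_n)$ using only $1$- and $2$-handles (the ribbon property), and that it is a $\Z/p$-homology cobordism when $n=p^k$.

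For the ribbon property, I would begin by perturbing the height function $S^3\times I\to I$ so that its restriction to $C$ is a Morse function with only index-$0$ and index-$1$ critical points. Standard local models then give a handle decomposition of $X:=(S^3\times I)\setminus \nu(C)$ on $Y_J:=S^3\setminus \nu(J)$ with one $1$-handle per birth and one $2$-handle per saddle. Since $\tilde X$ is, by construction, the $n$-fold cyclic cover of $X$ classified by $\pi_1(X)\twoheadrightarrow H_1(X)\cong \Z\twoheadrightarrow \Z/n$, and cyclic covers preserve handle indices, $\tilde X$ inherits a handle decomposition on $\Sigma_n(J)\setminus \nu(J_n)$ consisting only of $1$- and $2$-handles.

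For the homology cobordism property I would proceed in two steps. First, observe that $X$ itself is already a $\Z$-homology cobordism from $Y_J$ to $Y_K$: this follows from the long exact sequences of the pairs $(X,Y_J)$ and $(X,Y_K)$ together with the fact that the meridian of $C$ generates $H_1(X)\cong \Z$ and restricts to the meridians of both $J$ and $K$, so both inclusions induce $H_1$-isomorphisms, with the rest following from the handle decomposition. Equivalently, the cellular relative boundary $\partial\colon C_2(X,Y_J;\Z)\to C_1(X,Y_J;\Z)$ (a $b\times b$ integer matrix from the handle decomposition) lies in $\mathrm{GL}_b(\Z)$. Second, lift this to the cover: $C_*(\tilde X,\tilde Y_J;\Z)$ is naturally identified with $C_*(X,Y_J;\Z[\Z/n])$, a chain complex of free $\Z[\Z/n]$-modules with boundary a $b\times b$ matrix $\tilde \partial$ over $\Z[t]/(t^n-1)$ whose specialization at $t=1$ recovers $\partial$. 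Since $n=p^k$, the Frobenius identity $(t^n-1)=(t-1)^n$ in $\F_p[t]$ shows that $\F_p[t]/(t^n-1)$ is local with residue field $\F_p$, so invertibility of $\partial\bmod p$ (immediate from $\det\partial=\pm 1$) forces invertibility of $\tilde\partial$ over $\F_p[t]/(t^n-1)$, whence $H_*(\tilde X,\tilde Y_J;\Z/p)=0$. The argument from the $K$-end is symmetric, via the dual handle decomposition on $Y_K$ with only $2$- and $3$-handles.

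The main obstacle is the clean identification of the relative chain complex of the cover with the twisted complex $C_*(X,Y_J;\Z[\Z/n])$ in a way that makes the specialization $t\mapsto 1$ really recover the original boundary map. This is standard for regular covers once one chooses a CW structure on $X$ compatible with the handle decomposition and its lifts, but it is the one place in the argument where care is needed; the arithmetic observation that $\F_p[t]/(t^n-1)$ is local exactly when $n$ is a power of $p$ then does the rest of the work, making precise why the hypothesis $n=p^k$ is the right one.
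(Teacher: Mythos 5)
Your argument for the ribbon property (perturb the height function, take local models, lift the handle decomposition) is exactly what the paper does. For the $\Z/p$-homology cobordism property, however, you take a genuinely different route. The paper works homology group by homology group: it invokes Gordon's Milnor-style exact sequence
\[ H_1(X_\infty; \Z/p) \xrightarrow{\;t^{p^e}-1\;} H_1(X_\infty; \Z/p) \to H_1(X_{p^e}; \Z/p) \to \Z/p \to 0 \]
for a homology solid torus $X$, uses the Frobenius identity $(t^{p^e}-1)\equiv(t-1)^{p^e}\pmod p$ together with surjectivity of $t-1$ on $H_1(X_\infty;\Z/p)$ to conclude $H_1(X_{p^e};\Z/p)\cong\Z/p$ at both ends and in the middle, checks the inclusion maps are isomorphisms by tracking a meridian, and then disposes of $H_{\geq 2}$ by combining vanishing for the knot-complement ends, the handle structure of the cobordism, and an Euler characteristic count. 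You instead work at the chain level: identify $C_*(\widetilde X,\widetilde Y_J;\Z)$ with the twisted complex $C_*(X,Y_J;\Z[\Z/n])$, observe that over $\Z$ the relative boundary $\partial\colon C_2\to C_1$ is in $\mathrm{GL}_b(\Z)$ because $X$ is a $\Z$-homology cobordism built from $1$- and $2$-handles, and then exploit that $\F_p[t]/(t^{p^k}-1)=\F_p[t]/(t-1)^{p^k}$ is a local ring with residue field $\F_p$, so that invertibility of $\tilde\partial$ follows from invertibility of its reduction $\partial\bmod p$. This kills $H_*(\widetilde X,\widetilde Y_J;\Z/p)$ in all degrees at once, with the $K$-end handled symmetrically by the dual handle decomposition (or by Poincar\'e--Lefschetz duality). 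Both proofs hinge on the same arithmetic input $(t^{p^k}-1)=(t-1)^{p^k}$ in $\F_p[t]$, but yours packages it as ``reduction mod the maximal ideal of a local ring'' rather than as an induction through Gordon's exact sequence; it is a clean and correct alternative, with the one point requiring care (as you yourself flag) being the identification of the relative cellular chain complex of the cover with the $\Z[\Z/n]$-twisted complex of the base and the compatibility of the augmentation $t\mapsto 1$ with the untwisted boundary.
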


\begin{proof}
Since $C$ is a ribbon concordance,  its complement is a ribbon cobordism; that is, $S^3\times I \setminus \nu(C)$ is obtained from $S^3\setminus \nu(J)$ by attaching 1-handles and 2-handles  \cite{gordon-ribbon}. The $n$-fold cyclic cover $W_n\setminus \nu(C_n)$ is therefore also a ribbon cobordism, as we can just lift the handle decomposition.

It remains to show that if $n=p^e$ is a prime power, then $W_n\setminus \nu(C_n)$ is a $\Z/p$-homology cobordism \[\Sigma_n(J)\setminus \nu(J_n) \to \Sigma_n(K)\setminus \nu(K_n);\] in other words, that the inclusions of these knot complements into the complement of $C_n$ induce isomorphisms on homology with $\Z/p$-coefficients.  These manifolds are the $n$-fold cyclic covers of the complements of $C$ (in $S^3 \times I$) and of $J$ and $K$ (in $S^3$), each of which has the homology of $S^1\times D^2$.  Now if $X$ is a homology solid torus, and $X_k$ and $X_\infty$ are its $k$-fold and infinite cyclic covers, we follow \cite[\S5]{gordon-aspects} to get for any power $n=p^e$ an exact sequence
\[ H_1(X_\infty; \Z/p) \xrightarrow{t^{p^e}-1} H_1(X_\infty; \Z/p) \to H_1(X_{p^e}; \Z/p) \to \Z/p \to 0 \]
of $(\Z/p)[t,t^{-1}]$-modules, where $t$ acts by a generator of the deck transformation group on each $H_1$ and trivially on the last $\Z/p$.  Since \[(t^{p^e}-1) \equiv (t-1)^{p^e} \pmod{p},\] and $t-1$ acts surjectively on $H_1(X_\infty;\Z/p)$ (see \cite[\S4]{gordon-aspects}), it follows that $H_1(X_{p^e}; \Z/p) \cong \Z/p$ for all $e \geq 1$.  In particular, each of the inclusion-induced maps
\begin{align*}
H_1(\Sigma_n(J) \setminus \nu(J_n); \Z/p) &\to H_1(W_n \setminus \nu(C_n); \Z/p), \\
H_1(\Sigma_n(K) \setminus \nu(K_n); \Z/p) &\to H_1(W_n \setminus \nu(C_n); \Z/p)
\end{align*}
is a map from $\Z/p$ to $\Z/p$, which is then an isomorphism because each $H_1$ is generated by a meridian of $J_n$ or  $K_n$. (To see this for $W_n \setminus \nu(C_n)$, note that the meridian of $J$ or $K$ is also a meridian of $C$, hence is dual to some relative cycle in $H_3((S^3\times I) \setminus \nu(C), \partial)$, and that the corresponding meridian of $J_n$ or $K_n$ in $W_n$ is nonzero in homology because it is dual to a lift of that cycle.)

Finally, each of these spaces has $H_k = 0$ over $\Z/p$ for  $k \geq 2$: we've seen that $H_0 \cong H_1 \cong \Z/p$; they all have $H_k = 0$ for $k \geq 3$, since these homology groups vanish for $\Sigma_n(J) \setminus \nu(J_n)$ and for $\Sigma_n(K) \setminus \nu(K_n)$, and since $W_n \setminus \nu(C_n)$ is obtained from the complement of $J_n$ by attaching only $1$-handles and $2$-handles; and then since their Euler characteristics are zero (as covers of spaces with trivial Euler characteristic) they have $H_2 = 0$ as well.  We conclude that $W_n \setminus \nu(C_n)$ is a $\Z/p$-homology cobordism as claimed.
\end{proof}

The importance of Lemma \ref{lem:branched-cover} is that it enables us to prove the following, which is a consequence of the  behavior of Heegaard Floer homology under ribbon $\Z/2$-homology cobordisms:

\begin{corollary}
\label{cor:rank} Suppose that $J$ and $K$ are knots in $S^3$ with $J\leq K$. Then \[\dim \HFK(\Sigma_n(J),J_n) \leq  \dim \HFK(\Sigma_n(K),K_n)\] whenever $n$ is a power of $2$.
\end{corollary}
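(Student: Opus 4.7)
The approach is to combine Lemma~\ref{lem:branched-cover} with the injectivity of cobordism maps induced on knot Floer homology by ribbon $\Z/2$-homology concordances. Let $C\subset S^3\times I$ be a ribbon concordance from $J$ to $K$ and suppose $n=2^e$. By Lemma~\ref{lem:branched-cover}, the lift $C_n\subset W_n$ is a ribbon $\Z/2$-homology concordance from $J_n\subset \Sigma_n(J)$ to $K_n\subset \Sigma_n(K)$, in the sense that $W_n\setminus\nu(C_n)$ is a ribbon $\Z/2$-homology cobordism between the corresponding knot complements.

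The next step is to invoke the extension of Zemke's ribbon concordance theorem to the ribbon $\Z/2$-homology setting: the cobordism $(W_n, C_n)$ induces a map
\[
F_{W_n, C_n}\colon \HFK(\Sigma_n(J), J_n) \to \HFK(\Sigma_n(K), K_n)
\]
on knot Floer homology with $\Z/2$-coefficients that is injective. Since both sides are finite-dimensional $\Z/2$-vector spaces, injectivity of $F_{W_n,C_n}$ is exactly the inequality we want, and no further computation is required.

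The main obstacle is locating or verifying the precise form of the injectivity statement, because Zemke's original theorem is phrased for ribbon concordances in $S^3\times I$ while here the ambient cobordism $W_n$ is only a $\Z/2$-homology $S^3\times I$. I expect the argument to go through essentially verbatim: the ribbon hypothesis gives a handle decomposition of $W_n\setminus\nu(C_n)$ with only $1$- and $2$-handles, and doubling the cobordism together with the $\Z/2$-homology hypothesis should produce a left inverse for $F_{W_n,C_n}$ just as in Zemke's proof, the $\Z/2$-homology condition being exactly what controls the auxiliary $2$-handle maps that appear when the cobordism is turned around. I would cite the appropriate generalization in the literature on ribbon homology cobordisms and knot Floer homology, or, failing a verbatim reference, note the short adaptation of Zemke's argument explicitly.
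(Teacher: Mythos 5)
Your proposal is correct and matches the paper's argument essentially verbatim: after applying Lemma~\ref{lem:branched-cover}, the paper invokes \cite[Corollary 4.13]{dlvvw}, which is precisely the generalization of Zemke's theorem to ribbon $\Z/2$-homology cobordisms that you describe, giving that $\HFK(\Sigma_n(J),J_n)$ is a direct summand of $\HFK(\Sigma_n(K),K_n)$. Your sketch of the mechanism (handle decomposition with only $1$- and $2$-handles, doubling to produce a left inverse, with the $\Z/2$-homology condition controlling the turned-around handle maps) is the right description of how that reference proves it.
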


\begin{proof}
Suppose  $n$ is a power of 2, and let $C$ be a ribbon concordance from $J$ to $K$. Lemma \ref{lem:branched-cover} says that the exterior of the lifted concordance $C_n$  from $J_n$ to $K_n$ is a ribbon $\Z/2$-homology cobordism. Then \cite[Corollary 4.13]{dlvvw} asserts that the knot Floer homology of $J_n$ is a direct summand of the knot Floer homology of $K_n$.
\end{proof}

The following lemma is well-known and often credited to Thurston \cite{thurston-surfaces}, but we've included a proof  since we could not find a reference quite to our liking. In particular, several references (like \cite{flp,ivanov-entropy}) express the dilatation in terms of a $\limsup$, which is not sufficient for our needs, since in the proof of Proposition \ref{prop:dilatation} we have to take a limit as $n$ goes to infinity along powers of 2.

\begin{lemma}
\label{lem:fix} Suppose  $K\subset S^3$ is a hyperbolic fibered knot, and let $\varphi$ be a pseudo-Anosov representative of its monodromy. Then the dilatation $\lambda(K)$ is given by \[\lambda(K) = \lim_{n\to \infty} \big(\#\mathrm{Fix}(\varphi^n)\big)^{1/n},\] where $\#\mathrm{Fix}(\varphi^n)$ denotes the number of fixed points of $\varphi^n$.
\end{lemma}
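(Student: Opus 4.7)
The plan is to sandwich $(\#\mathrm{Fix}(\varphi^n))^{1/n}$ between two quantities that both converge to $\lambda(K)$: an upper bound from Markov partitions and a lower bound from the Lefschetz fixed-point theorem. Treating the two sides separately is what delivers a genuine limit rather than merely a $\limsup$.

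For the upper bound, I would invoke Thurston's construction of a Markov partition for $\varphi$ on the fiber surface $\Sigma$: a finite family $R_1,\ldots,R_k$ of rectangles, in the singular Euclidean structure coming from the invariant measured foliations, with disjoint interiors whose union is $\Sigma$, such that each $\varphi(R_i)$ meets each $R_j$ in a union of subrectangles fully crossing $R_j$ in the unstable direction. The associated transition matrix $A=(a_{ij})$, with $a_{ij}$ counting the crossings, is a nonnegative integer matrix whose Perron eigenvalue is $\lambda(K)$, essentially by construction: $A$ records the action of $\varphi$ on transversal arcs to the unstable foliation, which are stretched by $\lambda(K)$. Fixed points of $\varphi^n$ in the interiors of the rectangles are in bijection with closed length-$n$ cycles in the subshift of finite type determined by $A$, counted by $\mathrm{tr}(A^n)$; fixed points lying on boundaries of rectangles or at singularities of the foliations contribute an error bounded independently of $n$. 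Hence $\#\mathrm{Fix}(\varphi^n)\leq \mathrm{tr}(A^n)+O(1)\leq C_1\lambda(K)^n$, giving $\limsup(\#\mathrm{Fix}(\varphi^n))^{1/n}\leq\lambda(K)$.

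For the lower bound, I would use the Lefschetz fixed-point theorem on the closed surface $\widehat{\Sigma}$ obtained by capping off $\partial\Sigma$. Each fixed point of a pseudo-Anosov $\varphi^n$ carries a negative integer Lefschetz index, either $-1$ (at a generic hyperbolic fixed point) or $1-p$ (at a fixed $p$-pronged singularity), bounded in absolute value by a constant $C_2$ depending only on the maximum number of prongs of $\varphi$. Thus
\[\#\mathrm{Fix}(\varphi^n)\geq \frac{1}{C_2}|L(\varphi^n)|=\frac{1}{C_2}\bigl|2-\mathrm{tr}\bigl(\varphi_*^n\bigm|H_1(\widehat{\Sigma})\bigr)\bigr|.\]
The action of $\varphi_*$ on $H_1$ (pulled back, if necessary, to the orientation double cover of the foliations) has $\lambda(K)^{\pm 1}$ as its dominant eigenvalues, so $|L(\varphi^n)|=\lambda(K)^n+O(\mu^n)$ for some $\mu<\lambda(K)$. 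Taking $n$-th roots yields the matching lower bound on $\liminf$, and combining with the upper bound finishes the proof.

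The main obstacle, as the authors flag just above the statement, is exactly promoting $\limsup$ to $\lim$. What could conceivably go wrong is that $(\#\mathrm{Fix}(\varphi^n))^{1/n}$ dips below $\lambda(K)$ along some subsequence; the Lefschetz lower bound rules this out by producing a positive multiple of $\lambda(K)^n$ for every $n$ with no oscillation. The technical care needed lies in (a) verifying that $\lambda(K)^{\pm 1}$ really are the dominant eigenvalues of $\varphi_*$ on first (co)homology, a classical fact going back to Thurston but one that may require passing to the orientation cover when the invariant foliations are non-orientable, and (b) the standard identification of the negative Lefschetz indices at regular and singular fixed points of a pseudo-Anosov.
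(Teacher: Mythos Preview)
Your strategy is sound but takes a different route from the paper's. The paper uses a single invariant train track with incidence matrix $M$ and cites Cotton-Clay for the two-sided bound $|\#\mathrm{Fix}(\varphi^n)-\tr(M^n)|=O(1)$; since $M$ is Perron--Frobenius, $\lambda(K)$ is its \emph{unique} eigenvalue of maximal modulus, so $(\tr(M^n))^{1/n}\to\lambda(K)$ as a genuine limit, and the bounded additive error transfers this to $(\#\mathrm{Fix}(\varphi^n))^{1/n}$. Your Markov-partition matrix $A$ carries the same information, so you could have finished the same way; instead you use $A$ only for the upper bound and bring in Lefschetz for the lower bound. This buys a homological perspective but costs the orientability detour you flag in (a): when the invariant foliations are non-orientable the spectral radius of $\varphi_*$ on $H_1(\widehat{\Sigma})$ is strictly less than $\lambda(K)$, so one really must pass to the orientation cover and compare fixed-point counts upstairs and downstairs---genuine extra work that the one-matrix approach sidesteps entirely.

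One small correction: it is not true that every fixed point of a pseudo-Anosov has negative Lefschetz index. A regular fixed point at which $\varphi^n$ locally reverses the orientation of each foliation has index $+1$, and a fixed singularity whose prongs are nontrivially rotated likewise has index $+1$. Your inequality $\#\mathrm{Fix}(\varphi^n)\geq|L(\varphi^n)|/C_2$ survives, since it only needs $|\mathrm{ind}(x)|\leq C_2$, but the stated justification is inaccurate.
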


\begin{proof}
Fix an invariant train track for $\varphi$, and let $M$ be the incidence matrix for the train track map induced by $\varphi$. Then the difference between $\#\mathrm{Fix}(\varphi^n)$ and $\tr(M^n)$ is bounded by a constant multiple of the genus of $K$, as explained for example in \cite[Section 5]{cotton-clay}. Now, $M$ is Perron--Frobenius, and the dilatation $\lambda(K)$ is the unique eigenvalue of $M$ of maximal modulus. It follows that \[\lambda(K) = \lim_{n\to \infty} \big(\tr(M^n)\big)^{1/n}= \lim_{n\to \infty} \big(\#\mathrm{Fix}(\varphi^n)\big)^{1/n}\] as desired.
\end{proof}

\begin{proposition}
\label{prop:dilatation}
Suppose that $K\subset S^3$ is a knot with arc index $\delta$. If $J\subset S^3$ is a fibered hyperbolic knot with $J\leq K$, then $\lambda(J) \leq \delta!$.
\end{proposition}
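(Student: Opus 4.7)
The plan is to assemble the three ingredients already in place---the arc-index bound on $\dim \HFK$ of cyclic branched covers (Lemma \ref{lem:inequality}), the injectivity under ribbon $\Z/2$-homology cobordism (Corollary \ref{cor:rank}), and the fixed-point description of dilatation (Lemma \ref{lem:fix})---and then invoke the relationship between the knot Floer homology of a fibered knot and the fixed points of its monodromy, advertised in the introduction. I would run the argument along powers of $2$ so that Corollary \ref{cor:rank} is available, and then take $n$-th roots at the end.

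First, let $\varphi$ be a pseudo-Anosov representative of the monodromy of $J$ with fiber $F$. The key geometric observation is that for every $n$, the lift $J_n \subset \Sigma_n(J)$ is itself fibered with fiber $F$ and monodromy $\varphi^n$: the complement $\Sigma_n(J)\setminus\nu(J_n)$ is the $n$-fold cyclic cover of the mapping torus $S^3\setminus\nu(J) = M_\varphi$ corresponding to $H_1 \to \Z/n$, and this cover is canonically identified with $M_{\varphi^n}$. In particular $\varphi^n$ is again pseudo-Anosov (with the same invariant foliations as $\varphi$).

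Next I would invoke the cited fact that, for a fibered knot with pseudo-Anosov monodromy $\psi$ on a surface of fixed topological type, one has an inequality of the form
\[ \#\mathrm{Fix}(\psi) \leq \dim \HFK(\Sigma_n(J),J_n) + c(g), \]
where $c(g)$ depends only on the genus $g$ of $F$ (the additive error absorbs contributions from the boundary component and is uniform in the power). Applied to $\psi = \varphi^n$, this gives
\[ \#\mathrm{Fix}(\varphi^n) \leq \dim \HFK(\Sigma_n(J),J_n) + c(g). \]
Now restrict $n$ to powers of $2$. By Corollary \ref{cor:rank} and Lemma \ref{lem:inequality},
\[ \dim \HFK(\Sigma_n(J),J_n) \leq \dim \HFK(\Sigma_n(K),K_n) \leq \frac{(\delta!)^n}{2^{\delta-1}}, \]
so
\[ \#\mathrm{Fix}(\varphi^n) \leq \frac{(\delta!)^n}{2^{\delta-1}} + c(g). \]
Taking $n$-th roots and letting $n\to\infty$ along powers of $2$, Lemma \ref{lem:fix} gives
\[ \lambda(J) = \lim_{n\to\infty} \bigl(\#\mathrm{Fix}(\varphi^n)\bigr)^{1/n} \leq \lim_{n\to\infty} \left(\frac{(\delta!)^n}{2^{\delta-1}} + c(g)\right)^{1/n} = \delta!, \]
since the additive constant and the fixed factor $2^{-(\delta-1)}$ both contribute $1$ in the limit.

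The main obstacle is step two: one must cite (or prove) an inequality of the form $\#\mathrm{Fix}(\varphi^n) \leq \dim \HFK(\Sigma_n(J),J_n) + O_g(1)$ with an error term that is uniform in $n$. This is exactly why Lemma \ref{lem:fix} was stated as an honest limit rather than a $\limsup$: an additive error $c(g)$ that does not grow with $n$ washes out on taking $n$-th roots and passing to the limit, but any multiplicative error growing with $n$ would ruin the conclusion. Assuming such a uniform bound is in hand, restricting to $n = 2^k$ (so that Corollary \ref{cor:rank} applies) and using Lemma \ref{lem:fix}'s strict limit delivers the inequality $\lambda(J) \leq \delta!$ cleanly.
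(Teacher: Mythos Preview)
Your approach is essentially identical to the paper's. The one point you flag as an obstacle---the inequality bounding $\#\mathrm{Fix}(\varphi^n)$ in terms of $\dim\HFK$---is resolved in the paper by citing Ni \cite{ni-fixed} and Ghiggini--Spano \cite{ghiggini-spano}, who give the sharper bound
\[ \#\mathrm{Fix}(\varphi^n) \leq \dim\HFK(\Sigma_n(J),J_n,g-1) - 1 \]
in the next-to-top Alexander grading; no additive constant $c(g)$ is needed, and restricting to a single grading only helps since it makes the right-hand side smaller. With that citation in hand your argument goes through verbatim.
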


\begin{proof}
Suppose that $J\leq K$, and let $n= 2^e$ be a power of 2. Then  \[\dim \HFK(\Sigma_n(J),J_n) \leq  \dim \HFK(\Sigma_n(K),K_n) \leq \frac{(\delta!)^n}{2^{\delta -1}},\]
where the first inequality is Corollary \ref{cor:rank} and the second is Lemma \ref{lem:inequality}. Letting \[g = g(J) = g(J_n),\] it follows that \begin{equation}\label{eqn:inequality} \big(\dim \HFK(\Sigma_n(J),J_n,g-1) \big)^{1/n}\leq \frac{\delta!}{2^{(\delta-1)/n}}.\end{equation} Let $\varphi$ be the pseudo-Anosov representative of the monodromy of $K$. Then $\varphi^n$ is the pseudo-Anosov representative of the monodromy of $J_n$, and the number of fixed points of $\varphi^n$ satisfies \[\#\textrm{Fix}(\varphi^n)\leq\dim \HFK(\Sigma_n(J),J_n,g-1)-1,\] by the  work of Ni \cite{ni-fixed} and Ghiggini--Spano \cite{ghiggini-spano}. The inequality \eqref{eqn:inequality}  therefore implies that \[\big (\# \textrm{Fix}(\varphi^n)\big)^{1/n} < \frac{\delta!}{2^{(\delta-1)/n}}.\]  On the other hand, the dilatation $\lambda(J)$  is given by \[\lambda(J) = \lim_{n\to \infty} \big (\# \textrm{Fix}(\varphi^n)\big)^{1/n},\] by Lemma \ref{lem:fix}.
So, letting $e$ and thus $n=2^e$ approach $\infty$, we find that $\lambda(J) \leq \delta!$.
\end{proof}

The same argument combined with a result of Cornish \cite{cornish} proves the following:

\begin{lemma}
\label{lem:entropy}
If  $J\leq K$ are hyperbolic fibered knots in $S^3$ then $\lambda(J) \leq \lambda(K)^{g(K)}.$
\end{lemma}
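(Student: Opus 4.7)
The plan is to follow the argument of Proposition \ref{prop:dilatation} almost verbatim, replacing the arc-index bound of Lemma \ref{lem:inequality} with a dilatation-based bound coming from Cornish \cite{cornish}. Since $K$ is fibered and hyperbolic with pseudo-Anosov monodromy $\varphi_K$ of dilatation $\lambda(K)$ and fiber genus $g(K)$, the lifted knot $K_n\subset\Sigma_n(K)$ is again fibered, with the same fiber surface and with monodromy $\varphi_K^n$, whose dilatation is $\lambda(K)^n$.

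First I would invoke Cornish's bound on the total rank of knot Floer homology of cyclic branched covers of a hyperbolic fibered knot, which I expect to take the form
\[\dim\HFK(\Sigma_n(K),K_n)\leq p(n)\,\lambda(K)^{n\,g(K)}\]
for some function $p(n)$ with $p(n)^{1/n}\to 1$. For $n=2^e$ a power of $2$, combining with Corollary \ref{cor:rank} yields
\[\dim\HFK(\Sigma_n(J),J_n)\leq p(n)\,\lambda(K)^{n\,g(K)}.\]
Letting $\varphi_J$ denote the pseudo-Anosov monodromy of $J$ and $g=g(J)=g(J_n)$, the work of Ni \cite{ni-fixed} and Ghiggini--Spano \cite{ghiggini-spano} then gives
\[\#\mathrm{Fix}(\varphi_J^n)\leq\dim\HFK(\Sigma_n(J),J_n,g-1)-1\leq p(n)\,\lambda(K)^{n\,g(K)}.\]
Taking $n$-th roots, applying Lemma \ref{lem:fix}, and letting $e\to\infty$ so that $p(n)^{1/n}\to 1$, I would conclude
\[\lambda(J)=\lim_{e\to\infty}\big(\#\mathrm{Fix}(\varphi_J^{2^e})\big)^{1/2^e}\leq\lambda(K)^{g(K)}.\]

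The main obstacle is pinning down the exact form of Cornish's estimate: the exponent on $\lambda(K)$ must be $n\,g(K)$ (rather than some larger quantity such as $n(2g(K)-1)$), and the multiplicative prefactor must be subexponential in $n$. Given that the target exponent in the lemma is precisely $g(K)$, this is presumably exactly what Cornish's results provide for cyclic branched covers of hyperbolic fibered knots, but matching his hypotheses to our setting (in particular, to the monodromies $\varphi_K^n$ for $n$ a power of $2$) is the one genuinely new ingredient beyond the proof of Proposition \ref{prop:dilatation}.
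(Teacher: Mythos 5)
Your proposal matches the paper's proof essentially verbatim: the paper cites Cornish's Theorem 3.1.2, which gives exactly the bound you anticipated with a constant prefactor, $\dim\HFK(\Sigma_n(K),K_n)\leq c\,\lambda(K)^{g(K)n}$ for some $c$ depending only on $K$, and then substitutes this for $(\delta!)^n/2^{\delta-1}$ in the proof of Proposition~\ref{prop:dilatation}. Your one stated worry (that the exponent might be $n(2g(K)-1)$ rather than $ng(K)$, or that the prefactor might grow too fast) is resolved exactly as you hoped, so the argument goes through unchanged.
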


\begin{proof}
In his thesis \cite[Theorem 3.1.2]{cornish}, Cornish showed   that if $K\subset S^3$ is a  fibered hyperbolic knot, then there is some constant $c$ depending on $K$ such that \[\dim \HFK(\Sigma_n(K),K_n)\leq c(\lambda(K))^{g(K)n}\] for all  natural numbers $n$. Replacing $\frac{(\delta!)^n}{2^{\delta -1}}$ with $c(\lambda(K))^{g(K)n}$ everywhere in the proof of Proposition \ref{prop:dilatation} immediately yields the result.
\end{proof}

We are now ready to prove the  results stated in the introduction.

\begin{proof}[Proof of Theorem \ref{thm:main1}]
Let $K\subset S^3$ be any knot, and let $\delta$ be its arc index. Suppose that $J\subset S^3$ is a fibered hyperbolic knot with $J\leq K$, and let $\varphi$ be a pseudo-Anosov representative of its monodromy. The complement $S^3\setminus \nu(J)$ is determined by the conjugacy class of $\varphi$, and therefore so is $J$ by  \cite{gordon-luecke-complement}. So it suffices to prove that there are only finitely many possibilities for the conjugacy class of $\varphi.$  The dilatation of $\varphi$ satisfies $\lambda(J) \leq \delta!$ by Proposition \ref{prop:dilatation}, and $g(J) \leq g(K)$ by  \cite[Theorem 1.3]{zemke-ribbon}. Theorem \ref{thm:main1} then follows from the fact that for a fixed compact, oriented, connected surface $S$ and a fixed constant $M$, there are only finitely many conjugacy classes of pseudo-Anosov homeomorphisms of $S$ with dilatation at most $M$ \cite{ivanov-bounded-dilatation}. \end{proof}

\begin{proof}[Proof of Corollary \ref{cor:main1}] Suppose  $K\subset S^3$ is fibered, and let $J\leq K$. An observation of Silver  \cite{silver} together with Kochloukova's resolution of Rapaport's conjecture on knot-like groups \cite{kochloukova}  shows that $J$ is fibered as well, as explained in \cite{miyazaki}. The corollary then follows from Theorem \ref{thm:main1}.
\end{proof}

\begin{proof}[Proof of Theorem \ref{thm:vol-arc}] Suppose $K\subset S^3$ has genus $g$ and arc index $\delta$, and let $J\leq K$ be a hyperbolic fibered knot.
Kojima--McShane's result in \cite[Theorem 1]{kojima-mcshane} implies  that 
\begin{equation}\label{eqn:J}\mathrm{vol}(S^3\setminus J) \leq 3\pi(2g(J)-1)\log(\lambda(J)).\end{equation} Then the theorem follows from the inequalities
\[3\pi(2g(J)-1)\log(\lambda(J))\,\leq \,3\pi(2g-1)\log(\lambda(J))\,\leq\, 3\pi (2g-1)\log(\delta!),\]
 coming from Zemke's result \cite{zemke-ribbon} that $g(J)\leq g(K)=g$, and Proposition \ref{prop:dilatation}, respectively.
\end{proof}

\begin{proof}[Proof of Theorem \ref{thm:volume}]
Kojima proves in \cite[Theorem 1]{kojima} that for each natural number $g$ and real number $\epsilon>0$, there is a constant $b_{g,\epsilon}$ such that \begin{equation}\label{eqn:K}\log(\lambda(K)) \leq b_{g,\epsilon}\cdot\mathrm{vol}(S^3\setminus K)\end{equation} for any hyperbolic fibered knot  $K$ of genus $g$  where the  systole of $S^3\setminus K$  is at least $\epsilon$.
On the other hand, the Kojima--McShane result \cite[Theorem 1]{kojima-mcshane} says that 
\begin{equation}\label{eqn:J}\mathrm{vol}(S^3\setminus J) \leq 3\pi(2g(J)-1)\log(\lambda(J))\end{equation} for any hyperbolic fibered knot $J$, as mentioned above. Let \[c_{g,\epsilon} := 3\pi g(2g-1)b_{g,\epsilon}.\] Now, suppose that $J$ and $K$ are hyperbolic fibered knots in $S^3$ with $J\leq K$, where $g(K)=g$ and the systole of $S^3\setminus K$ is at least $\epsilon$. Then
\begin{align*}
\mathrm{vol}(S^3\setminus J) &\leq 3\pi(2g(J)-1)\log(\lambda(J))\\
&\leq 3\pi(2g(K)-1)\log(\lambda(J))\\
&\leq 3\pi g(K)(2g(K)-1)\log(\lambda(K))\\
&= 3\pi g(2g-1)\log(\lambda(K))\\
&\leq 3\pi g(2g-1)b_{g,\epsilon}\cdot\mathrm{vol}(S^3\setminus K)\\
&= c_{g,\epsilon}\cdot\mathrm{vol}(S^3\setminus K),
\end{align*}
where the first line is \eqref{eqn:J}, the second is the fact that $g(J)\leq g(K)$ by Zemke \cite{zemke-ribbon}, the third is Lemma \ref{lem:entropy}, and the fifth is \eqref{eqn:K}.
\end{proof}

\section{Discussion}\label{sec:comment}
In this section, we collect   some comments and questions related to this work. 

First, we'd like to remove the \emph{hyperbolic} hypothesis in Theorem \ref{thm:main1}. Fix $K\subset S^3$ and suppose that  $J\leq K$ is fibered. If $J$ is not hyperbolic then either:
\begin{itemize}
\item $J$ is a torus knot, and there are only finitely many possible such $J$, since $g(J)\leq g(K)$ by Zemke \cite{zemke-ribbon}, and there are finitely many torus knots of a given genus; or
\item $J$ is a satellite knot, and  has reducible monodromy $\varphi$. If moreover $\varphi$ has no pseudo-Anosov components, then $J$ is obtained by taking iterated cables and connected sums of torus knots, and again there are finitely many such knots of a given genus.
\end{itemize}
If $J$ has reducible monodromy $\varphi$ with pseudo-Anosov components, then one can hope to bound the dilatations of these components and use that to prove finiteness, as in the proofs of Proposition \ref{prop:dilatation} and Theorem \ref{thm:main1}. To bound these dilatations, the naive strategy following Proposition \ref{prop:dilatation} would be to first take an $m$-fold  cyclic cover of the concordance complement such that $\varphi^m$ fixes each pseudo-Anosov component setwise, and then  consider further $2^e$-fold cyclic covers from there. The issue is that the resulting covers, while still ribbon cobordisms, will not necessarily be  $\Z/2$- or even $\Q$-homology cobordisms, which means that one cannot apply the injectivity results of \cite{dlvvw} that are used for Corollary \ref{cor:rank} and subsequently Proposition \ref{prop:dilatation}.

Second, and  more importantly, we'd like to remove the \emph{fibered} hypothesis in Theorem \ref{thm:main1}, so as to answer Question \ref{ques:finite}, and therefore Gordon's question about descending sequences of ribbon concordances, in full. To prove the finiteness of hyperbolic \emph{nonfibered} $J\leq K$, one might first hope to  bound, in terms of a quantity depending only on  $K$,  the entropy of Gabai--Mosher pseudo-Anosov flows  transverse to a minimal genus Seifert surface for $J$. One issue is that we do not  yet understand the relationship between knot Floer homology and the dynamics of such flows in the nonfibered case. Another is that, unlike in the fibered case, it's not true that for a fixed genus there are only finitely many such flows, up to orbit equivalence, with entropy less than a given constant. However, such a finiteness result may be possible if we also have a bound on the topological complexity of the sutured Seifert surface complement. This raises the    following vague question about the behavior of \emph{guts}  (see \cite{agol-zhang})  under ribbon concordance:

\begin{question}
Does $J\leq K$ imply that the guts of $J$  are no more complicated than those of $K$?
\end{question}

Here's a more precise version,  relevant to Gordon's question:

\begin{question}
Is it true that for any sequence of knots \[\dots \leq K_3\leq K_2 \leq K_1\] there is some $m$ such that the guts of $K_n$ are the same as those of $K_m$ for all $n\geq m$?  
\end{question}

Recall that a knot $K\subset S^3$ is said to be \emph{nearly fibered} if \[\dim \HFK(S^3,K,g(K)) = 2.\] We classified the nearly fibered knots of genus one  in \cite{bs-nearly}; they include the knot $5_2$ and the pretzel knots $P(-3,3,2n+1)$. Moreover,  Li--Ye proved  in \cite{li-ye-nearly} that there are only three possibilities for the guts of any nearly fibered knot, so perhaps the following is   tractable:

\begin{problem}
Prove for any knot $K\subset S^3$ that there are only finitely many  nearly fibered  $J\leq K$.
\end{problem}

On the subject of Gordon's question  and nearly fibered knots, here is the lemma we promised in \S\ref{sec:intro}. We learned  it from Jen Hom and don't claim any  originality:

\begin{lemma}\label{lem:explain} Suppose that either $K_1$ is  fibered, or  $K_1$ is nearly fibered of  genus one. Then for any sequence of knots satisfying \[\dots\leq K_3\leq K_2\leq K_1\] there is some $m$ such that $K_n = K_m$ for all $n\geq m$.
\end{lemma}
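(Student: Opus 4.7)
The plan is to handle the two hypotheses on $K_1$ separately.

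Assume first that $K_1$ is fibered. As in the proof of Corollary~\ref{cor:main1}, Silver~\cite{silver} combined with Kochloukova~\cite{kochloukova} implies that every predecessor of a fibered knot is fibered, so inductively each $K_n$ is fibered. Zemke's genus inequality~\cite{zemke-ribbon} then makes $g(K_n)$ a weakly decreasing sequence of nonnegative integers, which must stabilize at some $g$; after truncating, assume $g(K_n)=g$ for all $n$. Zemke's direct-summand theorem~\cite{zemke-ribbon} next gives bigraded inclusions $\HFK(K_{n+1}) \hookrightarrow \HFK(K_n)$ as direct summands, so $\dim\HFK(K_n)$ is also weakly decreasing and eventually constant; after a further truncation each such inclusion becomes an isomorphism of bigraded $\F$-vector spaces.

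The remaining step is to deduce $K_{n+1}=K_n$ from this Floer-theoretic stability.  I would attempt to prove the following rigidity statement: if $J\leq K$ are fibered knots of the same Seifert genus and the induced map on $\HFK$ is an isomorphism, then $J=K$.  The plan would combine the identification $\HFK(K,g)\cong\F$ with the contact/monodromy class of a fibered knot, Juh\'asz--Zemke naturality of that class under ribbon cobordisms, and the sutured Floer description recovering the monodromy of a fibered knot up to conjugation, in order to force the monodromies of $J$ and $K$ to be conjugate and hence the knots to coincide.  This rigidity is the step I expect to be the main obstacle.

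Now assume $K_1$ is nearly fibered of genus one. Zemke's bound gives $g(K_n)\leq 1$ for every $n$. If some $K_n$ is the unknot, then $g(K_m)=0$ and hence $K_m$ is the unknot for every $m\geq n$, and the chain is trivially stable. Otherwise each $K_n$ has genus one, so $\dim\HFK(K_n,1)\in\{1,2\}$ is weakly decreasing and thus eventually constant. If the limit is $1$, every $K_n$ for $n$ large is a fibered knot of genus one, hence a trefoil or the figure-eight knot; this is a finite set, so Agol's partial order~\cite{agol-ribbon} and the descending chain condition on a finite poset force stabilization. If the limit is $2$, every $K_n$ for $n$ large is nearly fibered of genus one, and by the classification in~\cite{bs-nearly} each such $K_n$ equals $5_2$ or one of the pretzel knots $P(-3,3,2k+1)$. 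Since distinct members of that family have distinct Alexander polynomials, and since the summand theorem forces $\HFK(K_n)$, and hence its Euler characteristic $\Delta_{K_n}$, to stabilize, only finitely many members of the list can occur along the chain; Agol's partial order then terminates the sequence.
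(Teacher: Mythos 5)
Your overall strategy matches the paper's (split by hypothesis, use monotonicity under $\leq$, reduce to a finite or classified list), but both halves have gaps where the paper invokes a specific result you are missing.

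In the fibered case, after you stabilize $g(K_n)$ and $\dim\HFK(K_n)$, you pose as the ``main obstacle'' a rigidity statement and sketch a Floer-theoretic proof of it via the contact class and a claimed ``sutured Floer description recovering the monodromy of a fibered knot up to conjugation.'' No such recovery is known: knot Floer homology does not determine the monodromy up to conjugacy, so that route does not close. In fact the rigidity you want is true for a much more elementary reason and needs none of the Floer input: once the genera stabilize, the Alexander polynomials of the (fibered) $K_n$ all have degree $2g$, and Gordon's \cite[Lemma 3.4(ii)]{gordon-ribbon} says that ribbon concordant fibered knots whose Alexander polynomials have the same degree are equal. This is precisely what the paper uses; the $\HFK$-isomorphism detour is unnecessary.

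In the nearly fibered case, your argument that only finitely many knots from the classification can appear relies on the claim that distinct members of the list (in particular the pretzel knots $P(-3,3,2k+1)$) have distinct Alexander polynomials. That claim is false: the pretzel knots $P(-3,3,2k+1)$ all have the same Alexander polynomial $-2t+5-2t^{-1}$, since for $P(p,q,r)$ with $p,q,r$ odd the Alexander polynomial depends only on $pq+qr+rp$, which is $-9$ for all $k$. So stabilization of $\Delta_{K_n}$ (equivalently, of the graded Euler characteristic of $\HFK$) does not restrict the chain to finitely many members of that family. This is exactly why the paper instead invokes Levine--Zemke \cite{levine-zemke-ribbon} to stabilize the Khovanov homologies of the $K_n$, and then uses the fact from \cite{bs-nearly} that only finitely many nearly fibered genus one knots share a given Khovanov homology. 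You would need to replace the Alexander polynomial step with a Khovanov (or some other distinguishing) invariant along these lines.
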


\begin{proof}
Suppose first that $K_1$ is fibered. Then the work of Silver  \cite{silver} and Kochloukova \cite{kochloukova} implies that each $K_i$ is fibered, as noted in the proof of Corollary \ref{cor:main1}.  Gordon's work \cite[Lemma 3.4(i)]{gordon-ribbon}  implies that the  degrees of the Alexander polynomials of these knots    stabilize. Finally,  since ribbon concordant fibered knots are equal  whenever their Alexander polynomials have the same degree \cite[Lemma 3.4(ii)]{gordon-ribbon}, we conclude that  there is some $m$ such that $K_n = K_m$ for $n\geq m$. 

Suppose next that $K_1$ is nearly fibered of genus one, so that \[\dim\HFK(S^3,K_1,1) =2.\] The work of Zemke \cite{zemke-ribbon} and Levine--Zemke \cite{levine-zemke-ribbon} implies that there is some $m$ such that the knot Floer  and Khovanov homologies of $K_n$ agree with those of $K_m$ for all $n\geq m$. If the knot $K_m$ is fibered, then we are done as above. If not, then Zemke's work  \cite{zemke-ribbon}, together with Ghiggini's fiberedness detection result for genus one knots \cite{ghiggini-fibered}, implies that \[2\leq \dim\HFK(S^3,K_m,1) \leq \dim\HFK(S^3,K_1,1) = 2,\] from which it follows that  all $K_n$ with $n\geq m$ are   nearly fibered of genus one. But our classification of nearly fibered genus one knots in \cite{bs-nearly} shows that there are not infinitely many distinct such knots with the same Khovanov homologies.
\end{proof}



\bibliographystyle{myalpha}
\bibliography{References}

\end{document}